\newtheorem{definition}{Definition}[section]
\newtheorem{thm}[definition]{Theorem}
\newtheorem{cor}[definition]{Corollary}
\date{}
\begin{document}
	
	\begin{center}
		\textbf{Hilbert Space of Complex-Valued Harmonic Functions in the Unit Disc}
	\end{center}
	\vspace{2mm}
	\begin{center}
		Tseganesh Getachew Gebrehana and Hunduma Legesse Geleta\\\textbf{tseganesh.getachew@aau.edu.et and hunduma.legesse@aau.edu.et}\\
		\vspace{8mm}
		\textbf{Department of Mathematics, College of Natural and Computational Sciences, Addis Ababa University}
	\end{center}	     
	\vspace{4mm}
	First author: Tseganesh Getachew Gebrehana \\
	Corresponding author: Hunduma Legesse Geleta \\  
	\vspace{2mm}
	\pagenumbering{arabic}
	
	\vspace{3mm}       
	\textit{\textbf{Abstract}. We investigate an extended version of Hilbert space of analytic functions called Hilbert space of complex-valued harmonic functions. It is found that functions in Hilbert space of complex-valued harmonic functions exhibit many properties analogous to its analytic counter part such as complex-valued harmonic function analogous of norm, equivalent norms, reproducing kernels, growth estimates and Littlewood-Paley Identity Theorem.  In conclusion we prove that  many results in Hilbert space of analytic functions also hold in larger Hilbert space of complex-valued harmonic functions.} 
	\newline \\
	\textbf{Keywords/phrases}: Complex-valued harmonic functions; Hilbert space; Inner product; Norm; Integral means; Growth estimates; Reproducing kernel. 
	\section{Introduction}
	\pagenumbering{arabic}
	
	The family of complex-valued harmonic functions $f=u+iv$ defined in the unit disk $\mathbb{D}= \lbrace z: |z|<1 \rbrace$, where $u$ and $v$ are real harmonic in $\mathbb{D}$ ~($u$ and $v$ are not necessarily harmonic conjugates), were introduced by Clunie and SheilSmall \cite{CJTS} in 1984. In any simply connected domain $\mathnormal{G} \subset \mathbb{C}$, any such harmonic functions can be decomposed as $f = h + \overline{g}$, where $h$ and $g$ are analytic \cite{DPL1}. This family of complex-valued harmonic functions is a generalization of analytic mappings studied in geometric function theory, and much research has been done investigating the properties of these harmonic functions. For more details of the topic, see also  Duren \cite{DPL4}  and Dorff and Rolf \cite{DMJS} .
	\newline \\  
	The Hardy  space $\mathnormal{H^2} (\mathbb{D}) $ is defined to be the space of all analytic functions on the open unit disk $\mathbb{D}=\{z\in \mathbb{C}: \vert z \rvert < 1\}$ whose power series
	coefficients are square-summable. This space is equipped with norms which is derived from the inner product as in \cite{HBR}. Another equivalent norm has been also defined using integral means in this space. 
	By definition the sequence $\lbrace a_{n} \rbrace_{n=0}^{\infty}$ of power series coefficients belongs to the Hilbert space $\mathnormal{l^2}(\mathbb{Z^+}),$ and conversely, every sequence in $\mathnormal{l^2}(\mathbb{Z^+})$ defines an analytic function on the open unit disk belonging to  $\mathnormal{H^2}(\mathbb{D})$  by means of the map $\lbrace a_{n} \rbrace _{n=0}^{\infty} \rightarrow \sum_{n=0}^{\infty} a_{n}z^{n},$ 
	showing that $\mathnormal{H^2}(\mathbb{D})$ is isometrically isomorphic to $\mathnormal{l^2}(\mathbb{Z^+})$. From which it has been  concluded that the Hardy space $\mathnormal{H^2}(\mathbb{D})$ is a Hilbert space.
	Motivated by the results in Hilbert space of analytic function in the unit disc and methods used in finding norm equivalence, growth estimate and kernel on Hilbert space of analytic function, we extend these works on to Hilbert space of complex-valued harmonic functions.
	\newline \\
	Therefore, the purpose of this paper is to  explore different equivalent norms, find reproducing kernels, growth estimates,   and study operators associated  to an extended version of space of analytic functions called Hilbert space of complex-valued harmonic functions.  The Hilbert space of complex-valued harmonic functions is shown to exhibit a lot of  properties analogous to its counter part Hilbert space of analytic functions, including the Littlewood-Paley identity theorem. This paper is organized as follows: In section 2 we formally define Hilbert space of complex-valued harmonic functions. In Section 3, we study equivalent norm in terms of integral mean, growth estimates and kernel on space of complex valued harmonic functions in the unit disc.

	\section{Norm on $\mathnormal{H}_{h}^{2}(\mathbb{D})$}	
	In this section, we define a norm on space of complex-valued harmonic functions whose coefficients in the Taylor series representation is square summable and then show such space is a Hilbert space.
	\begin{thm}
		Let $f$ be a complex-valued harmonic function on the unit disc $\mathbb{D}$ given by 
		\begin{center}
			$f(z)=h(z)+\overline{g(z)}  $  
		\end{center}
		where $h(z)=\sum_{n=0}^{\infty} a_{n}z^{n}$ and $g(z)=\sum_{n=0}^{\infty} {b_{n}z^{n}}$ are analytic. Suppose \begin{center}
			$\mathnormal{H}_{h}^{2}(\mathbb{D})= \lbrace f:\mathbb{D} \rightarrow \mathbb{C}: f(z)= \sum_{n=0}^{\infty} a_{n}z^{n} + \sum_{n=0}^{\infty} \overline{b_{n}z^{n}}$ with $\sum_{n=0}^{\infty} |a_{n}|^{2}+|b_{n}|^{2} <\infty \rbrace$.
		\end{center} 	
		Then $||.||_{\mathnormal{H}_{h}^{2}}: {\mathnormal{H}_{h}^{2}}(\mathbb{D}) \rightarrow \mathbb{R}$  defined by $||f||_{\mathnormal{H}_{h}^{2}}=({\sum_{n=0}^{\infty}|a_{n}|^{2} + |b_{n}|^{2}})^{\frac{1}{2}}$ is a norm. 
	\end{thm}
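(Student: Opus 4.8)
The plan is to verify the three defining properties of a norm — nonnegativity together with definiteness, absolute homogeneity, and the triangle inequality — after first checking that $H^2_h(\mathbb{D})$ is a complex vector space and that the quantity $\|f\|_{H^2_h}$ is well defined. For well-definedness one must observe that the decomposition $f=h+\overline{g}$ is unique once one normalizes, say, $g(0)=0$ (equivalently $b_0=0$): if $h_1+\overline{g_1}=h_2+\overline{g_2}$, then $h_1-h_2=\overline{g_2-g_1}$ is simultaneously analytic and anti-analytic, hence constant, and the normalization forces $h_1=h_2$ and $g_1=g_2$; so the coefficient sequences $(a_n)$ and $(b_n)$, and hence $\sum_n(|a_n|^2+|b_n|^2)$, depend only on $f$. (I would state this normalization explicitly at the outset, since it is the point most easily overlooked.) That $H^2_h(\mathbb{D})$ is closed under addition and scalar multiplication will fall out of the computations below.

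Next I would record the two coefficient identities that carry all the content. Writing $f_j=h_j+\overline{g_j}$ with $h_j(z)=\sum_n a_n^{(j)}z^n$ and $g_j(z)=\sum_n b_n^{(j)}z^n$, one has $f_1+f_2=(h_1+h_2)+\overline{g_1+g_2}$, so the coefficient pair of $f_1+f_2$ is $(a_n^{(1)}+a_n^{(2)},\,b_n^{(1)}+b_n^{(2)})$; and for $\lambda\in\mathbb{C}$, $\lambda f=\lambda h+\overline{\,\overline{\lambda}\,g\,}$, so the coefficient pair of $\lambda f$ is $(\lambda a_n,\,\overline{\lambda}\,b_n)$. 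These follow from uniqueness of power-series expansions and are the only places the harmonic (rather than analytic) structure intervenes. Then: nonnegativity is immediate since each $|a_n|^2+|b_n|^2\ge 0$; $\|f\|_{H^2_h}=0$ forces every $a_n=b_n=0$, hence $h\equiv g\equiv 0$ by the identity theorem for analytic functions, hence $f\equiv 0$, the converse being trivial; and homogeneity follows from the second identity, $\|\lambda f\|_{H^2_h}^2=\sum_n(|\lambda a_n|^2+|\overline{\lambda}b_n|^2)=|\lambda|^2\sum_n(|a_n|^2+|b_n|^2)$.

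The triangle inequality is the one step that is not purely formal, and it is where I expect the (mild) obstacle to lie: by the first coefficient identity, $\|f_1+f_2\|_{H^2_h}$ is exactly the $\ell^2(\mathbb{Z}^+)$-norm of the sequence obtained by concatenating the two sequences $(a_n^{(1)}+a_n^{(2)})$ and $(b_n^{(1)}+b_n^{(2)})$, which is the termwise sum of the concatenated sequences attached to $f_1$ and to $f_2$; Minkowski's inequality in $\ell^2$ then gives $\|f_1+f_2\|_{H^2_h}\le\|f_1\|_{H^2_h}+\|f_2\|_{H^2_h}$ and simultaneously shows $f_1+f_2\in H^2_h(\mathbb{D})$. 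An equivalent and perhaps cleaner packaging, which also prepares the later sections, is to define $\langle f_1,f_2\rangle_{H^2_h}=\sum_n\big(a_n^{(1)}\overline{a_n^{(2)}}+\overline{b_n^{(1)}}\,b_n^{(2)}\big)$, check absolute convergence via the Cauchy–Schwarz inequality for $\ell^2$, verify it is a Hermitian inner product using the coefficient identities above, and observe $\|f\|_{H^2_h}=\langle f,f\rangle_{H^2_h}^{1/2}$; all three norm axioms then follow from general inner-product theory. In either route the genuine content is confined to the $\ell^2$ convergence/Minkowski step and the well-definedness of the decomposition; the rest is bookkeeping with power-series coefficients.
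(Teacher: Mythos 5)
Your proposal is correct and follows essentially the same route as the paper: a direct verification of the norm axioms, with the triangle inequality reduced to the Cauchy--Schwarz/Minkowski inequality in $\ell^2$ applied to the concatenated coefficient sequences. You do go beyond the paper in ways worth keeping: you verify definiteness ($\|f\|_{H^2_h}=0\Rightarrow f\equiv 0$) and the well-definedness of the coefficient pair via the normalization $b_0=0$ (the decomposition $f=h+\overline{g}$ is only unique up to a constant), neither of which the paper addresses, and you correctly record that the coefficients of $\lambda f$ are $(\lambda a_n,\overline{\lambda}b_n)$ where the paper writes $(\alpha a_n,\alpha b_n)$ --- harmless here since only moduli enter, but the accurate identity matters later for sesquilinearity of the inner product.
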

	\begin{proof}
		\begin{itemize}
			\item [(i).] By definition, $||f||_{{H}_{h}^{2}}^{2} \geq 0$.
			\item [(ii).] $||\alpha f||_{{H}_{h}^{2}}^{2}= \sum_{n=0}^{\infty}|\alpha a_{n}|^{2} + |\alpha b_{n}|^{2}=|\alpha|^{2} \sum_{n=0}^{\infty} |a_{n}|^{2} + |b_{n}|^{2}= |\alpha|^{2}||f||_{{H}_{h}^{2}}^{2}$. \newline
			Thus, $||\alpha f||_{{H}_{h}^{2}}=|\alpha|||f||_{{H}_{h}^{2}}$.
			\item [(iii.)] $||f+F||_{{H}_{h}^{2}}^{2} = \sum_{n=0}^{\infty} |a_{n}+A_{n}|^{2} + |b_{n}+B_{n}|^{2}$ 
			\begin{center}
				$= \sum_{n=0}^{\infty}  |a_{n}|^{2}+ |A_{n}|^{2} + 2\Re (a_{n}\overline{A_{n}}) +|b_{n}|^{2}+|B_{n}|^{2}+ 2\Re (b_{n}\overline{B}_{n})$	
			\end{center} 
			\begin{center}
				$\leq ||f||_{{H}_{h}^{2}}^{2} + ||F||_{{H}_{h}^{2}}^{2}+2||f||_{{H}_{h}^{2}}||F||_{{H}_{h}^{2}}=(||f||_{{H}_{h}^{2}}+||F||_{{H}_{h}^{2}})^{2}$.
			\end{center}
			This implies that
			\begin{center}
				$||f+F||_{{H}_{h}^{2}} \leq ||f||_{{H}_{h}^{2}}+||F||_{{H}_{h}^{2}}.$
			\end{center} 
		\end{itemize}
		Therefore, 
		\begin{center}
			$||f||_{{H}_{h}^{2}}(\mathbb{D})$ is a norm on $\mathnormal{H}_{h}^{2}(\mathbb{D})$.
		\end{center} 
	\end{proof}
	This norm is induced from the inner product defined on ${\mathnormal{H}_{h}^{2}}(\mathbb{D})$ as follows,
	
	\begin{center}
		$\langle f , F \rangle= \sum_{n=0}^{\infty} (a_{n}\overline{A_{n}}+  \overline{b_{n}}B_{n}). $
	\end{center}
	where $f(z)=\sum_{n=0}^{\infty} a_{n}z^{n}+\sum_{n=0}^{\infty} \overline{b_{n}z^{n}}$  and $F(z)=\sum_{n=0}^{\infty} A_{n}z^{n}+\sum_{n=0}^{\infty} \overline{B_{n}z^{n}}$ are in $\mathnormal{H}_{h}^{2}(\mathbb{D}).$ 
	
	\begin{thm}
		Let $\mathnormal{H}_{h}^{2}$ be the space of complex-valued harmonic functions on the unit disc  $\mathbb{D}:=\lbrace z \in \mathbb{C} : |z|<1 \rbrace$ whose Taylor series representation has square-summable coefficients. Then $\mathnormal{H}_{h}^{2}$ is a Hilbert space.
	\end{thm}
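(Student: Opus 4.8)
The plan is to show that $H_h^2(\mathbb{D})$, equipped with the inner product $\langle f,F\rangle = \sum_{n=0}^\infty (a_n\overline{A_n} + \overline{b_n}B_n)$, is complete; since the previous theorem already establishes that $\|\cdot\|_{H_h^2}$ is a norm (and one checks routinely that the displayed sesquilinear form is indeed an inner product inducing it), the only substantive task is completeness. The natural route is to transfer the problem to a sequence space. Specifically, I would set up the map $\Phi\colon H_h^2(\mathbb{D}) \to \ell^2(\mathbb{Z}^+)\oplus\ell^2(\mathbb{Z}^+)$ sending $f = \sum a_n z^n + \overline{\sum b_n z^n}$ to the pair of coefficient sequences $(\{a_n\},\{b_n\})$. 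By the very definition of $H_h^2(\mathbb{D})$ this map is a well-defined linear bijection, and the definition of the norm makes it an isometry onto $\ell^2\oplus\ell^2$ (with the latter given its standard Hilbert-space norm $\|(\{a_n\},\{b_n\})\|^2 = \sum |a_n|^2 + \sum |b_n|^2$). Since $\ell^2(\mathbb{Z}^+)$ is complete and a finite direct sum of Hilbert spaces is a Hilbert space, $\ell^2\oplus\ell^2$ is complete, and completeness transfers back across the isometric isomorphism $\Phi$.

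In carrying this out I would proceed in the following order. First, verify that $\Phi$ is well defined and linear: well-definedness uses uniqueness of the decomposition $f = h+\overline{g}$ (normalized, say, so that $g(0)$ is handled consistently with the paper's convention that the constant term sits in $h$ — or simply that the Taylor coefficients $a_n,b_n$ are uniquely determined by $f$), and linearity is immediate from termwise addition and scalar multiplication of power series. Second, observe $\Phi$ is surjective: given $(\{a_n\},\{b_n\})\in\ell^2\oplus\ell^2$, the series $\sum a_n z^n$ and $\sum b_n z^n$ converge locally uniformly on $\mathbb{D}$ (square-summable coefficients give radius of convergence at least $1$), defining analytic $h,g$, so $f=h+\overline g \in H_h^2(\mathbb{D})$ with $\Phi(f)$ the given pair. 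Third, note $\Phi$ is injective and norm-preserving directly from the formula for $\|\cdot\|_{H_h^2}$. Fourth, invoke completeness of $\ell^2$ and of finite direct sums. Fifth, conclude: if $(f_k)$ is Cauchy in $H_h^2(\mathbb{D})$ then $(\Phi f_k)$ is Cauchy in $\ell^2\oplus\ell^2$, hence converges to some pair, whose $\Phi$-preimage $f$ satisfies $\|f_k - f\|_{H_h^2}\to 0$.

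The main obstacle — really the only place requiring care rather than bookkeeping — is the well-definedness of the coefficient map, i.e.\ confirming that a complex-valued harmonic $f$ on $\mathbb{D}$ determines the sequences $\{a_n\}$ and $\{b_n\}$ unambiguously. One must pin down the normalization (the constant term ambiguity between $h$ and $\overline g$) and then argue uniqueness: if $\sum a_n z^n + \overline{\sum b_n z^n} \equiv 0$ on $\mathbb{D}$ with the chosen normalization, then separating the analytic and anti-analytic parts (for instance by integrating against $e^{in\theta}$ over circles $|z|=r$ and letting $r\to 1$, or by applying $\partial/\partial z$ and $\partial/\partial\bar z$) forces all $a_n = b_n = 0$. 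Once this is secured, everything else is the standard "isometrically isomorphic to $\ell^2$, hence Hilbert" argument already cited in the introduction for $H^2(\mathbb{D})$, now applied to $\ell^2\oplus\ell^2$. I would also remark in passing that, alternatively, one can prove completeness directly: from a Cauchy sequence $(f_k)$, the coefficient estimate $|a_n^{(k)} - a_n^{(m)}|^2 + |b_n^{(k)} - b_n^{(m)}|^2 \le \|f_k - f_m\|_{H_h^2}^2$ shows each coefficient sequence is Cauchy in $\mathbb{C}$, define $a_n,b_n$ as the limits, use a truncation/Fatou argument to show $f = \sum a_n z^n + \overline{\sum b_n z^n}$ lies in $H_h^2(\mathbb{D})$ and $f_k\to f$ in norm — but the isometry argument is cleaner and matches the paper's stated motivation.
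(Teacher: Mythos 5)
Your proposal is correct and follows essentially the same route as the paper: both arguments identify $H_h^2(\mathbb{D})$ isometrically with a sequence Hilbert space (the paper interleaves $\{a_n\}$ and $\{b_n\}$ into a single $\ell^2(\mathbb{Z}^+)$ sequence, while you use $\ell^2\oplus\ell^2$, which is the same thing up to a trivial unitary) and then transfer completeness. Your version is in fact more careful than the paper's, which never addresses the constant-term ambiguity in the decomposition $f=h+\overline{g}$ (the well-definedness of the coefficient map) and simply asserts the isometric isomorphism without verifying surjectivity or spelling out how completeness transfers.
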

	
	\begin{proof}
		Since the sequences $\lbrace a_{n} \rbrace_{n=0}^{\infty}$ and $\lbrace b_{n} \rbrace_{n=0}^{\infty}$ of power series coefficients belong to the Hilbert space $\mathnormal{l^2}(\mathbb{Z^+})$, then the sequence $\lbrace c_{n} \rbrace_{n=0}^{\infty}$ defined by $c_{2n} = a_n$ for each n= 0, 1, 2, ... and $c_{2n-1}= b_{n-1}$ for each n =1, 2, 3, ... belongs to the Hilbert space $\mathnormal{l^2}(\mathbb{Z^+})$. Conversely, every sequence in $\mathnormal{l^2}(\mathbb{Z^+})$ defines a complex-valued harmonic function on the open unit disk
		belongs to $\mathnormal{H_h^2}(\mathbb{D})$ by means of the map $\lbrace c_{n} \rbrace _{n=0}^{\infty} \rightarrow \sum_{n=0}^{\infty} a_{n}z^{n}+\sum_{n=0}^{\infty} \overline{b_{n}z^{n}}$. Thus, $\mathnormal{H_h^2}(\mathbb{D})$ is isometrically isomorphic to $\mathnormal{l^2}(\mathbb{Z^+})$. Thus we conclude that $\mathnormal{H_h^2}(\mathbb{D})$ is a Hilbert space.	
	\end{proof}

	\section{Equivalent Norms }
	We now consider Complex-valued harmonic function analogous  of equivalent norm by integral means and Littlewood-Paly Identity Theorem which were studied in space of analytic functions. 
	\begin{thm}
		The norm defined on $\mathnormal{H}_{h}^{2}$ (in Theorem 2.1) has another equivalent representation by integral means which is denoted by $\mathnormal{M}_{2}^{2} (f,r)$ and defined by 
		\begin{center}
			$\mathnormal{M}_{2}^{2} (f,r) = \frac{1}{2\pi} \int_{-\pi}^{\pi} |f(re^{i\theta})|^{2} d\theta$,
		\end{center}
		where $f(z)=\sum_{n=0}^{\infty} a_{n}{z}^{n}+\sum_{n=0}^{\infty} \overline{ b_{n}{z}^{n}}$ on $\mathbb{D}$ and $0 \leq r < 1$. 	
	\end{thm}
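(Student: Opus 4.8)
The statement to be proved is that $\|f\|_{H_h^2}$ admits the second description $\|f\|_{H_h^2}^2=\lim_{r\to 1^-}M_2^2(f,r)=\sup_{0\le r<1}M_2^2(f,r)$, so that $f\in H_h^2$ exactly when the integral means stay bounded and the two expressions for the norm agree. The plan is the harmonic analogue of the classical $H^2$ argument: for each fixed radius $r$ expand $|f(re^{i\theta})|^2=f(re^{i\theta})\overline{f(re^{i\theta})}$ into a double series in the exponentials $e^{ik\theta}$, integrate term by term, and then let $r\to 1^-$.

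First I would write, for fixed $r\in[0,1)$,
\[
f(re^{i\theta})=\sum_{n=0}^{\infty}a_{n}r^{n}e^{in\theta}+\sum_{n=0}^{\infty}\overline{b_{n}}\,r^{n}e^{-in\theta}
\]
together with its conjugate
\[
\overline{f(re^{i\theta})}=\sum_{n=0}^{\infty}\overline{a_{n}}\,r^{n}e^{-in\theta}+\sum_{n=0}^{\infty}b_{n}r^{n}e^{in\theta}.
\]
Since a power series converges absolutely and uniformly on compact subsets of $\mathbb{D}$, both series converge uniformly in $\theta$ on the circle $\{|z|=r\}$; hence so does the product that defines $|f(re^{i\theta})|^2$, and term-by-term integration over $[-\pi,\pi]$ is legitimate. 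Applying the orthogonality relation $\frac{1}{2\pi}\int_{-\pi}^{\pi}e^{ik\theta}\,d\theta=\delta_{k0}$, only products of zero total frequency survive: the $a_n\overline{a_m}$ terms contribute $\sum_{n\ge0}|a_n|^2r^{2n}$, the $\overline{b_n}b_m$ terms contribute $\sum_{n\ge0}|b_n|^2r^{2n}$, and the two mixed families $a_nb_m$ and $\overline{a_m}\,\overline{b_n}$ survive only at $n=m=0$, contributing $2\Re(a_0b_0)$ (a constant independent of $r$, which vanishes under the customary normalization $g(0)=0$). This yields
\[
M_2^2(f,r)=\sum_{n=0}^{\infty}\bigl(|a_n|^{2}+|b_n|^{2}\bigr)r^{2n}+2\Re(a_0b_0).
\]

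Finally, each summand $(|a_n|^{2}+|b_n|^{2})r^{2n}$ is nonnegative and nondecreasing in $r$, so $r\mapsto M_2^2(f,r)$ is nondecreasing, and by monotone convergence $\lim_{r\to1^-}M_2^2(f,r)=\sup_{0\le r<1}M_2^2(f,r)=\sum_{n=0}^{\infty}(|a_n|^{2}+|b_n|^{2})+2\Re(a_0b_0)$, which is $\|f\|_{H_h^2}^2$ (under the normalization $b_0=0$). Conversely, if $\sup_{0\le r<1}M_2^2(f,r)<\infty$, then for every $N$ the partial sum $\sum_{n=0}^{N}(|a_n|^{2}+|b_n|^{2})r^{2n}$ is bounded uniformly in $r$; letting $r\to1^-$ gives $\sum_{n=0}^{\infty}(|a_n|^{2}+|b_n|^{2})<\infty$, i.e. $f\in H_h^2$ (recalling that any complex-valued harmonic $f$ on $\mathbb{D}$ already has the form $h+\overline g$). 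I expect the only points needing genuine care to be the justification of the interchange of summation and integration, i.e. uniform convergence on $|z|=r$, and the bookkeeping of the two $n=0$ cross terms; the remainder is a line-for-line transcription of the analytic case.
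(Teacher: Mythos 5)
Your proof follows the same route as the paper's: expand $f(re^{i\theta})$ and its conjugate into exponential series, integrate term by term using orthogonality of $\{e^{ik\theta}\}$, identify $M_2^2(f,r)$ with a power series in $r$, and let $r\to 1^-$. The one substantive difference is that you keep track of the $n=m=0$ cross term, which contributes $2\Re(a_0b_0)$ to $M_2^2(f,r)$; the paper silently discards all the mixed terms by treating $\int_{-\pi}^{\pi}e^{i(n+m)\theta}\,d\theta$ as zero, which fails precisely when $n=m=0$. Your bookkeeping is the correct one: the identity $M_2^2(f,r)=\sum_{n=0}^{\infty}(|a_n|^2+|b_n|^2)r^{2n}$, and hence $\lim_{r\to1^-}M_2^2(f,r)=\Vert f\Vert_{H_h^2}^2$, holds only under a normalization such as $g(0)=b_0=0$ (or at least $\Re(a_0b_0)=0$), a hypothesis the theorem as stated does not impose and which is also what makes the decomposition $f=h+\overline{g}$, and therefore the norm itself, well defined. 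Beyond that, you justify the interchange of summation and integration by uniform convergence on the circle $|z|=r$, and you replace the paper's somewhat circular partial-sum argument at the end with monotone convergence of the nonnegative series; both are improvements in rigor rather than changes of method.
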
 
	\begin{proof}
		Now using polar form of the series representation of $f$, we get
		\begin{center}
			$ f(re^{i\theta} )=\sum_{n=0}^{\infty} a_{n}{r}^{n}{e^{in\theta}}+\sum_{n=0}^{\infty} \overline{ b_{n}{r}^{n}{e^{in\theta}}}$.
		\end{center}
		After some algebraic manipulation, we obtain 
		\begin{center}
			$ |f(re^{i\theta})|^{2}=\sum_{n=0}^{\infty} \sum_{m=0}^{\infty} a_{n}\overline{a_{m}}{r}^{n+m}{e^{i(n-m)\theta}} +\sum_{n=0}^{\infty}\sum_{m=0}^{\infty} a_{m}b_{n}{r}^{m+n}{e^{i(n+m)\theta}} + \sum_{n=0}^{\infty}\sum_{m=0}^{\infty}\overline{ a_{m}b_{n}{r}^{m+n}{e^{i(n+m)\theta}}}+\sum_{n=0}^{\infty} \sum_{m=0}^{\infty} b_{m}\overline{b_{n}}{r}^{n+m}{e^{i(n-m)\theta}}$. 
		\end{center}
		It is clear that the integral of exponential function $\lbrace e^{in\theta} \rbrace_{n=0}^{\infty}$ is $2\pi$ when $n=m$ and $0$ when $n \neq m$. Multiplying both sides of the above equation by $\frac{1}{2\pi}$ and integrating with respect to $\theta$ from $-\pi $ to $\pi$, we get 
		\begin{center}
			$ \frac{1}{2\pi}\int_{-\pi}^{\pi}|f(re^{i\theta})|^{2} d\theta=\sum_{n=0}^{\infty} |a_{n}|^{2}{r}^{2n} +\sum_{n=0}^{\infty} |b_{n}|^{2}{r}^{2n} +\frac{1}{2\pi}\int_{-\pi}^{\pi} \sum_{n=0}^{\infty}\sum_{n=0}^{\infty} 2\Re ({a_{n}b_{n}{r}^{2n}{e^{2in\theta}}})d\theta $
		\end{center}
		\begin{center}
			$=\sum_{n=0}^{\infty} (|a_{n}|^{2} + |b_{n}|^{2}){r}^{2n} + \sum_{n=0}^{\infty}\sum_{n=0}^{\infty} \frac{2{r}^{2n}}{2\pi} \Re ({a_{n}b_{n}\int_{-\pi}^{\pi} {e^{2in\theta}}}d\theta)$
		\end{center}
		\begin{center}
			$=\sum_{n=0}^{\infty} (|a_{n}|^{2} + |b_{n}|^{2}){r}^{2n}$ \hspace{6mm} 
		\end{center} 
		Therefore,
		\begin{center}
			$M_{2}^{2} (f,r)=\frac{1}{2\pi}\int_{-\pi}^{\pi}|f(re^{i\theta})|^{2}d\theta=\sum_{n=0}^{\infty} (|a_{n}|^{2} + |b_{n}|^{2}){r}^{2n}. $
		\end{center} 	
		To complete the proof we need to show	
		$||f||_{\mathnormal{H}_{h}^2}= \lim_{r \rightarrow 1^{-}} M_{2} (f,r)$.
		So, from the above equation we have 
		\begin{center}
			$M_{2}^{2}(f,r)=\sum_{n=0}^{\infty} (|a_{n}|^{2} + |b_{n}|^{2}){r}^{2n}$
			$ \leq \sum_{n=0}^{\infty} (|a_{n}|^{2} + |b_{n}|^{2})=||f||_{\mathnormal{H}_{h}^{2}}^{2}$,
		\end{center}
		whenever $f \in {\mathnormal{H}_{h}^{2}}$ and $0 \leq r<1$.
		So $M_{2} (f,r)$ is bounded by the $\mathnormal{H}_{h}^{2}$-norm. It remains to show that whenever $\lim_{r \rightarrow 1^{-}}M_{2} (f,r)=\mathnormal{M}< \infty$, then the partial sum of the series of the form $M_{2}^{2} (f,r)=\sum_{n=0}^{\infty} (|a_{n}|^{2} + |b_{n}|^{2}){r}^{2n} $ are bounded by $\mathnormal{M^2}:$
		\begin{center}
			$\sum_{n=0}^{\mathnormal{N}} (|a_{n}|^{2} + |b_{n}|^{2}){r}^{2n} \leq \sum_{n=0}^{\infty} (|a_{n}|^{2} + |b_{n}|^{2}){r}^{2n} \leq \mathnormal{M^2}$.
		\end{center} 
		As $r \rightarrow 1^{-}$, these partial sums converges to functions in $\mathnormal{H}_{h}^{2}$, which must therefore be bounded by $\mathnormal{M^2}$ as well. If every partial sum of Taylor series representation of functions in $\mathnormal{H}_{h}^{2}$ is bounded by  $\mathnormal{M^2}$ , then this is also true for the entire series. This completes the proof. 
	\end{proof}
	\begin{cor}
		The space of bounded complex-valued harmonic functions on the unit disc $\mathnormal{H}_{h}^{\infty}$ is a subset of $\mathnormal{H}_{h}^2$.
	\end{cor}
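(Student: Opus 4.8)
The plan is to deduce the inclusion directly from the integral-means formula established in Theorem 3.1. Suppose $f=h+\overline{g}\in \mathnormal{H}_h^{\infty}$, so that there is a constant $C\ge 0$ with $|f(z)|\le C$ for every $z\in\mathbb{D}$. The goal is to show $\sum_{n=0}^{\infty}(|a_n|^2+|b_n|^2)<\infty$, which is precisely the membership condition for $\mathnormal{H}_h^2$.

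First I would observe that, for each fixed $r\in[0,1)$, the boundedness of $f$ gives
\[
M_2^2(f,r)=\frac{1}{2\pi}\int_{-\pi}^{\pi}|f(re^{i\theta})|^2\,d\theta\le\frac{1}{2\pi}\int_{-\pi}^{\pi}C^2\,d\theta=C^2 .
\]
By Theorem 3.1 the left-hand side equals $\sum_{n=0}^{\infty}(|a_n|^2+|b_n|^2)r^{2n}$, so this series, and in particular each of its partial sums, is bounded by $C^2$:
\[
\sum_{n=0}^{N}(|a_n|^2+|b_n|^2)r^{2n}\le\sum_{n=0}^{\infty}(|a_n|^2+|b_n|^2)r^{2n}\le C^2\qquad (0\le r<1,\ N\in\mathbb{N}).
\]

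Next I would fix $N$ and let $r\to 1^{-}$ in the finite sum on the left; this is legitimate because it has only finitely many terms, and it yields $\sum_{n=0}^{N}(|a_n|^2+|b_n|^2)\le C^2$. Since $N$ is arbitrary and all terms are nonnegative, the full series converges and
\[
\|f\|_{\mathnormal{H}_h^2}^2=\sum_{n=0}^{\infty}(|a_n|^2+|b_n|^2)\le C^2<\infty ,
\]
so $f\in\mathnormal{H}_h^2$; in fact one obtains the quantitative estimate $\|f\|_{\mathnormal{H}_h^2}\le\sup_{z\in\mathbb{D}}|f(z)|$.

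There is essentially no serious obstacle here: the content is carried entirely by Theorem 3.1, and the only point requiring a moment's care is the passage to the limit $r\to 1^{-}$, which must be performed after truncating to a partial sum (interchanging the limit with the infinite sum directly would require a separate monotone-convergence justification, which the partial-sum argument sidesteps). If desired, one could add a remark that the inclusion is proper, since there exist complex-valued harmonic functions with square-summable Taylor coefficients that fail to be bounded on $\mathbb{D}$.
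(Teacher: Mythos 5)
Your proposal is correct and follows essentially the same route as the paper: bound the integral means $M_2^2(f,r)$ by the squared sup-norm and then invoke the equivalence of norms from the integral-means theorem to conclude the coefficient series converges. You are somewhat more careful than the paper in spelling out the passage to the limit $r\to 1^-$ via partial sums (the paper simply asserts $\lim_{r\to 1^-}M_2(f,r)\le \|f\|_{\mathnormal{H}_h^\infty}^2$ and concludes), but the underlying argument is the same.
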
	
	\begin{proof}
		Let $f \in \mathnormal{H}_{h}^\infty(\mathbb{D})$. Then $||f||_{\mathnormal{H}_{h}^\infty} = \sup_{z \in \mathbb{D}}|f(z)|$.  Now, $\frac{1}{2\pi}\int_{-\pi}^{\pi}|f(re^{i\theta})|^{2}d\theta \leq \frac{1}{2\pi}\int_{-\pi}^{\pi} \sup |f(re^{i\theta})|^{2}d\theta \newline =\frac{1}{2\pi}\int_{-\pi}^{\pi}||f||_{\mathnormal{H}_{h}^{\infty}}^{2}d\theta=||f||_{\mathnormal{H}_{h}^{\infty}}^{2}$,
		which holds true for every $0<r<1$.
		So for any $f \in {\mathnormal{H}_{h}^{\infty}} $  we get 
		\begin{center}
			$\lim_{r \rightarrow 1^{-}}M_{2} (f,r) \leq ||f||_{\mathnormal{H}_{h}^{\infty}}^{2}.$
		\end{center}
		Hence,  $f \in \mathnormal{H}_{h}^2$.
	\end{proof}	
	The following theorem is Littlewood-Paley identity theorem for space of complex-valued harmonic functions. It provides another expression for the $\mathnormal{H}_{h}^2$-norm. 
	\begin{thm}
		For every complex-valued harmonic functions $f \in \mathnormal{H}_{h}^2$ on the unit disc we have 
		\begin{center}
			$||f||_{\mathnormal{H}_{h}^2}=|f(0)|^{2}+2\int_{\mathbb{D}}|f'(z)|^{2} \log \frac{1}{|z|} dA(z), $
		\end{center}
		where $d\mathnormal{A}$ denotes the normalized Lebesgue measure on $\mathbb{D}$ ($d\mathnormal{A}=\frac{1}{\pi} dxdy=\frac{1}{\pi} r dr d\theta $).
	\end{thm}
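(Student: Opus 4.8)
The plan is to express both sides in terms of the Taylor coefficients $a_n$ of $h$ and $b_n$ of $g$ and to recognize each as the series $\sum_{n=0}^{\infty}\bigl(|a_n|^{2}+|b_n|^{2}\bigr)$, which by the definition of the $\mathnormal{H}_{h}^{2}$-norm (Theorem~2.1) equals $\|f\|_{\mathnormal{H}_{h}^{2}}^{2}$ (so the left-hand side is read as $\|f\|_{\mathnormal{H}_{h}^{2}}^{2}$). Here $f'$ is to be understood as the first-order derivative in the sense $|f'(z)|^{2}=|f_{z}(z)|^{2}+|f_{\bar z}(z)|^{2}=|h'(z)|^{2}+|g'(z)|^{2}$, with $h'(z)=\sum_{n\ge 1}n a_n z^{n-1}$ and $g'(z)=\sum_{n\ge 1}n b_n z^{n-1}$; since this quantity splits as a sum, the whole identity is really two independent copies of the classical analytic Littlewood--Paley computation --- one applied to $h$, one to $g$ --- together with the bookkeeping that recovers the zeroth coefficients through the term $|f(0)|^{2}$.

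First I would pass to polar coordinates $dA=\frac1\pi r\,dr\,d\theta$ and carry out the angular integration. Expanding $|h'(re^{i\theta})|^{2}$ and $|g'(re^{i\theta})|^{2}$ into double power series and invoking the orthogonality relation $\frac1{2\pi}\int_{-\pi}^{\pi}e^{i(n-m)\theta}\,d\theta=\delta_{nm}$ --- precisely the device used in the proof of Theorem~3.1 --- all off-diagonal contributions vanish and one is left with
\[
\frac1{2\pi}\int_{-\pi}^{\pi}|f'(re^{i\theta})|^{2}\,d\theta=\sum_{n=1}^{\infty}n^{2}\bigl(|a_n|^{2}+|b_n|^{2}\bigr)r^{2n-2}.
\]
The remaining radial integral is elementary: one integration by parts gives $\int_{0}^{1}r^{2n-1}\log\frac1r\,dr=\frac1{4n^{2}}$. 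Substituting this and keeping track of the constants (the angular integration contributes a factor $2$, and there is the prefactor $2$ in the statement), the $n^{2}$ cancels against $\frac1{4n^{2}}$ and one obtains $2\int_{\mathbb D}|f'(z)|^{2}\log\frac1{|z|}\,dA(z)=\sum_{n=1}^{\infty}\bigl(|a_n|^{2}+|b_n|^{2}\bigr)$. Adding $|f(0)|^{2}$, which supplies exactly the missing $n=0$ term $|a_0|^{2}+|b_0|^{2}$, yields $\sum_{n=0}^{\infty}\bigl(|a_n|^{2}+|b_n|^{2}\bigr)=\|f\|_{\mathnormal{H}_{h}^{2}}^{2}$, which is the claim.

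The one point that genuinely needs care --- the main obstacle --- is the justification of interchanging summation and integration, since the weight $\log\frac1{|z|}$ is unbounded at the origin (though integrable there) and the series $\sum_{n\ge 1}n^{2}|a_n|^{2}r^{2n-2}$ need not converge as $r\to 1^{-}$. I would handle this in the standard way: fix $\rho\in(0,1)$, run the entire computation over the subdisc $\{|z|\le\rho\}$, where $h'$ and $g'$ are holomorphic and their power series converge uniformly so that term-by-term integration is unproblematic, and then let $\rho\to 1^{-}$. Because $\log\frac1{|z|}\ge 0$ on $\mathbb D$ and all the coefficient terms are nonnegative, the monotone convergence theorem passes the limit through both the series $\sum_{n\ge 1}n^{2}\bigl(|a_n|^{2}+|b_n|^{2}\bigr)\int_{0}^{\rho}r^{2n-1}\log\frac1r\,dr$ and the integral $\int_{\{|z|\le\rho\}}|f'|^{2}\log\frac1{|z|}\,dA$, giving the identity in the limit. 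A small bookkeeping remark: strictly $f(0)=a_0+\overline{b_0}$, so the clean identity holds with $|h(0)|^{2}+|g(0)|^{2}$ in place of $|f(0)|^{2}$, the two coinciding under the customary normalization $g(0)=0$ of the decomposition $f=h+\overline g$.
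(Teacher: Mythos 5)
Your proof follows essentially the same route as the paper's: pass to polar coordinates, reduce the angular integral to the integral means $\mathnormal{M}_{2}^{2}(f',r)=\sum_{n\ge1}n^{2}(|a_n|^{2}+|b_n|^{2})r^{2n-2}$, and evaluate $\int_{0}^{1}r^{2n-1}\log\frac{1}{r}\,dr=\frac{1}{4n^{2}}$ so that the weights cancel. You additionally supply the monotone-convergence justification for interchanging sum and integral and correctly observe that, since $f(0)=a_0+\overline{b_0}$, the identity as stated requires the normalization $g(0)=0$ (or $|h(0)|^{2}+|g(0)|^{2}$ in place of $|f(0)|^{2}$) and that the left-hand side should read $\|f\|_{\mathnormal{H}_{h}^{2}}^{2}$ --- points the paper's own proof passes over silently.
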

	\begin{proof}
		We start by	considering the right hand side of equation of Theorem 3.3., and converting $f$ into polar form, we obtain
		\begin{align*}
			|f(0)|^{2}+2\int_{\mathbb{D}}|f'(z)|^{2}\log \frac{1}{|z|} d\mathnormal{A} =|f(0)|^{2}+2\int_{-\pi}^{\pi} \frac{1}{\pi} \int_{0}^{1}|f'(re^{i\theta})|^{2} (\log \frac{1}{r})r dr d\theta
		\end{align*}
		Interchanging the two integrals which can be justified by Fubini's theorem; we have the following		    
		\begin{align*}
			|f(0)|^{2}+2\int_{\mathbb{D}}|f'(z)|^{2}\log \frac{1}{|z|} d\mathnormal{A}		
			=|f(0)|^{2}+2\int_{0}^{1} (\frac{1}{\pi} \int_{-\pi}^{\pi}|f'(re^{i\theta})|^{2} d\theta) (\log \frac{1}{r})r dr                  
		\end{align*}
		Applying simple algebraic manipulations, we obtain 	  
		\begin{align*}
			|f(0)|^{2}+2\int_{\mathbb{D}}|f'(z)|^{2}\log \frac{1}{|z|} d\mathnormal{A}=|f(0)|^{2}+4\int_{0}^{1} \mathnormal{M}_{2}^{2}(f', r) (\log \frac{1}{r})r dr
		\end{align*}
		Replacing by the Taylor series representation, we get
		\begin{align*}
			|f(0)|^{2}+2\int_{\mathbb{D}}|f'(z)|^{2}\log \frac{1}{|z|} d\mathnormal{A}
			=|f(0)|^{2}+4\int_{0}^{1} \sum_{n=1}^{\infty} (|n|^{2}|a_{n}|^{2}+ |n|^{2}|b_{n}|^{2}) r^{(2n-2)} (\log \frac{1}{r})r dr\\
			=|f(0)|^{2}+4\sum_{n=1}^{\infty} |n|^{2}(|a_{n}|^{2}+ |b_{n}|^{2}) \int_{0}^{1} r^{(2n-1)} (\log \frac{1}{r})r dr	\\
			=|f(0)|^{2}+4\sum_{n=1}^{\infty} n^{2}(|a_{n}|^{2} + |b_{n}|^{2}) \frac{1}{4n^{2}}\\
			= |f(0)|^{2}+\sum_{n=1}^{\infty} (|a_{n}|^{2}+ |b_{n}|^{2})=||f||_{\mathnormal{H}_{h}^{2}}. 	
		\end{align*}
		From which we obtain,
		$$|f(0)|^{2}+2\int_{\mathbb{D}}|f'(z)|^{2}\log \frac{1}{|z|} d\mathnormal{A}=||f||_{\mathnormal{H}_{h}^{2}}. $$	
		This completes the proof.		
		
	\end{proof} 
	
	\section{Growth estimates and Kernels} 
	
	The analogous growth estimates and reproducing kernels on space of complex-valued harmonic functions in the unit disc can be obtained as follows. 
	
	\begin{thm} (Growth estimate). For each $z \in  \mathbb{D}$, the growth estimate of $f \in \mathnormal{H}_{h}^{2}$ is given by
		\begin{center}
			$|f(z)| \leq \frac{{2}||f||_{\mathnormal{H}_{h}^{2}}}{\sqrt{1-|z|^{2}}}.$
		\end{center}
	\end{thm}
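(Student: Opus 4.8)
The plan is to bound $|f(z)|$ pointwise directly from the square-summability of the Taylor coefficients, using the Cauchy--Schwarz inequality together with the explicit sum of a geometric series. First I would write $f(z)=h(z)+\overline{g(z)}$ with $h(z)=\sum_{n=0}^{\infty}a_{n}z^{n}$ and $g(z)=\sum_{n=0}^{\infty}b_{n}z^{n}$, so that the triangle inequality gives
\[
|f(z)|\le\sum_{n=0}^{\infty}|a_{n}|\,|z|^{n}+\sum_{n=0}^{\infty}|b_{n}|\,|z|^{n}.
\]

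Next I would apply the Cauchy--Schwarz inequality in $\ell^{2}(\mathbb{Z}^{+})$ to each of the two sums, pairing the sequence $\{|a_{n}|\}_{n\ge 0}$ (resp.\ $\{|b_{n}|\}_{n\ge 0}$) with $\{|z|^{n}\}_{n\ge 0}$; since $\sum_{n=0}^{\infty}|z|^{2n}=\frac{1}{1-|z|^{2}}$ for $|z|<1$, this yields
\[
\sum_{n=0}^{\infty}|a_{n}|\,|z|^{n}\le\Bigl(\sum_{n=0}^{\infty}|a_{n}|^{2}\Bigr)^{1/2}\frac{1}{\sqrt{1-|z|^{2}}},
\]
and likewise for the $b_{n}$-sum. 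Finally, since $\sum_{n=0}^{\infty}|a_{n}|^{2}\le\|f\|_{H_{h}^{2}}^{2}$ and $\sum_{n=0}^{\infty}|b_{n}|^{2}\le\|f\|_{H_{h}^{2}}^{2}$, adding the two estimates gives
\[
|f(z)|\le\frac{2\,\|f\|_{H_{h}^{2}}}{\sqrt{1-|z|^{2}}}.
\]

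The computation is entirely routine, so there is no real obstacle; the only point deserving a word of care is the justification that the series defining $f$ converges absolutely at each interior point $z$, which legitimizes the triangle-inequality rearrangement, and this follows from the very Cauchy--Schwarz bound just used, together with the elementary geometric-series evaluation $\sum_{n\ge 0}|z|^{2n}=(1-|z|^{2})^{-1}$. I would also remark that a slightly sharper constant $\sqrt{2}$ in place of $2$ is available by instead invoking $\bigl(\sum|a_{n}|^{2}\bigr)^{1/2}+\bigl(\sum|b_{n}|^{2}\bigr)^{1/2}\le\sqrt{2}\,\|f\|_{H_{h}^{2}}$; alternatively the estimate could be obtained abstractly from $|f(z)|\le\|f\|_{H_{h}^{2}}\sqrt{K_{h}(z,z)}$ once the reproducing kernel $K_{h}$ of $H_{h}^{2}$ is identified, but the direct coefficient argument is the cleanest route here.
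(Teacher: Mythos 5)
Your argument is correct and follows essentially the same route as the paper's proof: triangle inequality on the decomposition $f=h+\overline{g}$, Cauchy--Schwarz against the geometric sequence $\{|z|^{n}\}$, summation of $\sum_{n\ge 0}|z|^{2n}=(1-|z|^{2})^{-1}$, and the bounds $\|h\|_{H_{h}^{2}},\|g\|_{H_{h}^{2}}\le\|f\|_{H_{h}^{2}}$. Your added observations (absolute convergence justifying the rearrangement, and the sharper constant $\sqrt{2}$ via $x+y\le\sqrt{2}\sqrt{x^{2}+y^{2}}$) are correct refinements not present in the paper, but the core proof is the same.
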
	
	\begin{proof}
		By applying the triangle inequality for the modulus and Cauchy-Schwartz inequality to the complex-valued harmonic function of $f$ in Theorem 3.4 we obtain for each $z \in \mathbb{D}$, 
		\begin{align*}
			|f(z)|=|\sum_{n=0}^{\infty}a_{n}z^{n}+\sum_{n=0}^{\infty} \overline{b_{n}z^{n}}|  
			\leq \sum_{n=0}^{\infty}|a_{n}||z^{n}| +\sum_{n=0}^{\infty} |{b_{n}||z^{n}}|\\
			\leq (\sum_{n=0}^{\infty}|a_{n}|^{2})^{\frac{1}{2}} (\sum_{n=0}^{\infty}|z^{n}|^{2})^{\frac{1}{2}} +(\sum_{n=0}^{\infty} |b_{n}|^{2})^{\frac{1}{2}} (\sum_{n=0}^{\infty}|z^{n}|^{2})^{\frac{1}{2}}	\\
			\leq [(\sum_{n=0}^{\infty}|a_{n}|^{2})^{\frac{1}{2}} + (\sum_{n=0}^{\infty} |b_{n}|^{2})^{\frac{1}{2}}](\sum_{n=0}^{\infty}|z|^{2n})^{\frac{1}{2}}\\
			\leq (||h||_{\mathnormal{H}_{h}^{2}} +||g||_{\mathnormal{H}_{h}^{2}})(\sum_{n=0}^{\infty}|z|^{2n})^{\frac{1}{2}} \leq \frac{||h||_{\mathnormal{H}_{h}^{2}} +||g||_{\mathnormal{H}_{h}^{2}}}{\sqrt{1-|z|^{2}}}.
		\end{align*}			But then again, $||h||_{\mathnormal{H}_{h}^{2}} \leq ||f||_{\mathnormal{H}_{h}^{2}};  ||g||_{\mathnormal{H}_{h}^{2}} \leq ||f||_{\mathnormal{H}_{h}^{2}}$ implying that, 
		
		$$|f(z)| \leq \frac{2||f||_{\mathnormal{H}_{h}^{2}}}{\sqrt{1-|z|^{2}}}. $$	
		
	\end{proof}	
	
	\begin{thm}(Kernel)
		The reproducing kernel function $K_{\alpha}$ , for a point $\alpha$ in $\mathbb{D}$ is defined by 
		\begin{center}
			$K_{\alpha}(z)=\frac{1}{1-\overline{\alpha}{z}}+ \frac{1}{1-\alpha \overline{z}} \hspace{2mm} $for $  |\overline{\alpha} z|<1. $
		\end{center}
	\end{thm}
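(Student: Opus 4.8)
The statement asserts that $K_\alpha$ is the reproducing kernel of $H_h^2(\mathbb{D})$, so the plan is to verify the two defining properties of such a kernel: that $K_\alpha \in H_h^2(\mathbb{D})$ for each fixed $\alpha \in \mathbb{D}$, and that $\langle f, K_\alpha\rangle_{H_h^2} = f(\alpha)$ for every $f \in H_h^2(\mathbb{D})$. That these two properties pin down $K_\alpha$ uniquely is guaranteed once one observes, via the growth estimate of Theorem 3.4, that the point-evaluation functional $f \mapsto f(\alpha)$ is linear and bounded on $H_h^2(\mathbb{D})$ (it has operator norm at most $2/\sqrt{1-|\alpha|^2}$), so that the Riesz representation theorem provides a unique representing element; it then suffices to check that the $K_\alpha$ displayed above is that element.

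First I would expand $K_\alpha$ into power series. For $|\bar\alpha z| < 1$,
\[
K_\alpha(z) = \frac{1}{1-\bar\alpha z} + \frac{1}{1-\alpha\bar z} = \sum_{n=0}^\infty \bar\alpha^{\,n} z^n + \sum_{n=0}^\infty \overline{\bar\alpha^{\,n} z^n},
\]
so $K_\alpha$ has the canonical decomposition $h_K + \overline{g_K}$ with $h_K(z) = g_K(z) = \frac{1}{1-\bar\alpha z}$; equivalently, both the analytic and the co-analytic coefficient sequences of $K_\alpha$ equal $\{\bar\alpha^{\,n}\}_{n\geq 0}$. Hence $\sum_{n=0}^\infty\bigl(|\bar\alpha^{\,n}|^2+|\bar\alpha^{\,n}|^2\bigr)=2\sum_{n=0}^\infty|\alpha|^{2n}=\tfrac{2}{1-|\alpha|^2}<\infty$ because $|\alpha|<1$, which shows $K_\alpha\in H_h^2(\mathbb{D})$ and, in passing, that $\|K_\alpha\|_{H_h^2}^2=\tfrac{2}{1-|\alpha|^2}$.

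Next, writing an arbitrary $f\in H_h^2(\mathbb{D})$ as $f(z)=\sum_{n=0}^\infty a_n z^n+\sum_{n=0}^\infty\overline{b_n z^n}$ and feeding the coefficients $A_n=B_n=\bar\alpha^{\,n}$ into the inner product formula, I would compute
\[
\langle f,K_\alpha\rangle_{H_h^2}=\sum_{n=0}^\infty\bigl(a_n\overline{A_n}+\overline{b_n}\,B_n\bigr)=\sum_{n=0}^\infty\bigl(a_n\alpha^n+\overline{b_n}\,\overline{\alpha}^{\,n}\bigr)=\sum_{n=0}^\infty a_n\alpha^n+\overline{\sum_{n=0}^\infty b_n\alpha^n}=h(\alpha)+\overline{g(\alpha)}=f(\alpha),
\]
where the regrouping is legitimate since $\{a_n\},\{b_n\}\in\ell^2$ and $\{\alpha^n\}\in\ell^2$, so both scalar series converge absolutely by the Cauchy-Schwarz inequality. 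Together with the uniqueness of the reproducing kernel --- obtained by pairing the difference of any two candidate kernels with itself --- this identifies the displayed $K_\alpha$ as the reproducing kernel of $H_h^2(\mathbb{D})$.

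The argument is essentially bookkeeping, and the one delicate point is the conjugation in the second summand: one must record $\frac{1}{1-\alpha\bar z}=\sum_{n=0}^\infty\overline{\bar\alpha^{\,n}z^n}$, so that the co-analytic coefficients of $K_\alpha$ are $\bar\alpha^{\,n}$ and not $\alpha^n$. It is exactly this choice that lets the term $\overline{b_n}\,B_n$ collapse to $\overline{b_n}\,\overline{\alpha}^{\,n}=\overline{b_n\alpha^n}$ and hence rebuild $\overline{g(\alpha)}$; the opposite convention would produce something like $\overline{g(\bar\alpha)}$ and break the identity. Beyond keeping this straight, I do not anticipate any genuine obstacle.
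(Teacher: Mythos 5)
Your proof is correct, and at its core it performs the same coefficient computation as the paper: both arguments rest on expanding $K_\alpha$ as $\sum_n \overline{\alpha}^{\,n}z^n+\sum_n\overline{\overline{\alpha}^{\,n}z^n}$ and pairing coefficientwise against $f$ via the inner product $\langle f,K_\alpha\rangle=\sum_n(a_n\overline{c_n}+\overline{b_n}d_n)$. The difference is one of direction and completeness. The paper \emph{derives} the kernel: it posits $K_\alpha(z)=\sum c_nz^n+\sum\overline{d_nz^n}$ with unknown coefficients, imposes $\langle f,K_\alpha\rangle=f(\alpha)$ for all $f$, and reads off $c_n=d_n=\overline{\alpha}^{\,n}$ by matching terms (``this can be true if\ldots''), which strictly speaking only shows the choice is sufficient. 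You instead \emph{verify} the stated formula: you check that $K_\alpha$ actually lies in $H_h^2$ (computing $\|K_\alpha\|^2_{H_h^2}=\tfrac{2}{1-|\alpha|^2}$, which the paper only records after the proof), confirm the reproducing identity with an explicit justification of absolute convergence via Cauchy--Schwarz, and appeal to boundedness of point evaluation (from the growth estimate) plus the Riesz representation theorem to get existence and uniqueness of the representer. Your version therefore closes the small logical gaps in the paper's argument --- membership in the space, convergence of the pairing, and why coefficient matching determines $K_\alpha$ uniquely --- at the cost of assuming the formula in advance rather than discovering it. Your care with the conjugation convention on the co-analytic part is exactly the point where a sign-of-conjugate error would otherwise produce $\overline{g(\overline{\alpha})}$ instead of $\overline{g(\alpha)}$, and you handle it correctly.
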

	\begin{proof}
		For a point $\alpha \in \mathbb{D}$, $K_{\alpha}$ is the functional in  $ \mathnormal{H}_{h}^{2}(\mathbb{D})$ such that for all $f$ in $ \mathnormal{H}_{h}^{2}(\mathbb{D})$, 	we have 
		\begin{center}
			$\langle f, K_{\alpha} \rangle=f(\alpha)$
		\end{center}
		where $f$ and $K_{\alpha}$ are in $\mathnormal{H}_{h}^{2}$. Let	$f(z)=\sum_{n=0}^{\infty}a_{n}z^{n}+\sum_{n=0}^{\infty} \overline{b_{n}z^{n}}$ \hspace{0.25mm} and \hspace{0.25mm} $K_{\alpha}(z)=\sum_{n=0}^{\infty}c_{n}z^{n}+\sum_{n=0}^{\infty} \overline{d_{n}z^{n}}$  for some coefficients. Thus for each $f \in \mathnormal{H}_{h}^{2}$, 
		\begin{center}
			$\sum_{n=0}^{\infty}a_{n}\alpha^{n}+\sum_{n=0}^{\infty} \overline{b_{n}\alpha^{n}} = f(\alpha) = \langle f, K_{\alpha} \rangle = \sum_{n=0}^{\infty}a_{n}\overline{c_{n}}+\overline{b_{n}}d_{n}$.	
		\end{center}
		This implies that 
		\begin{center}
			$\sum_{n=0}^{\infty}a_{n}\alpha^{n}+\sum_{n=0}^{\infty} \overline{b_{n}} \hspace{1.5mm}\overline{\alpha^{n}} =\sum_{n=0}^{\infty}a_{n}\overline{c_{n}}+ \sum_{n=0}^{\infty}\overline{b_{n}} \hspace{1.5mm}d_{n}.$
		\end{center}
		So this can be true if the following holds true   
		\begin{center}
			$\alpha^{n}=\overline{c_{n}} $  \hspace{12mm} and \hspace{12mm} $\overline{\alpha^{n}}= d_{n} $.
		\end{center} 
		Thus,
		\begin{center}
			$K_{\alpha}(z)=\sum_{n=0}^{\infty}\overline{\alpha^{n}}{z^{n}}+\sum_{n=0}^{\infty}{\alpha^{n}} \overline{z^{n}}$
		\end{center}
		\begin{center}
			$=\sum_{n=0}^{\infty}(\overline{\alpha}z)^{n}+\sum_{n=0}^{\infty} (\alpha\overline{z})^{n}$
		\end{center}
		\begin{center}
			$=\frac{1}{1- \overline{\alpha} z}+ \frac{1}{1-\alpha \overline{z}} $ .
		\end{center}
		Therefore, the required result holds true for $|\overline{\alpha} z|<1.$ 	
	\end{proof}
	
	For a point $\alpha$ in the unit disk $\mathbb{D}$ , because the kernel function $K_{\alpha}$ is a functional in $\mathnormal{H}_{h}^{2}(\mathbb{D})$ , we have 
	\begin{center}
		$||K_{\alpha}||_{\mathnormal{H}_{h}^{2}}^{2}=\langle K_{\alpha}, K_{\alpha} \rangle = K_{\alpha} (\alpha)= \frac{1}{1-\overline{\alpha}{\alpha}}+ \frac{1}{1-\alpha \overline{\alpha}}=2(\frac{1}{1-|\alpha|^{2}}).$
	\end{center}

	\subsection*{Conclusion}
	In general, analogues to the analytic case we defined Hilbert space of complex-valued harmonic functions on the unit disc and obtained, equivalent norm representation in terms of integral means; proved Littlewood-Paley Identity Theorem. Moreover we obtained analogous growth estimate of complex-valued harmonic functions on the unit disc and defined the reproducing kernel which is a functional on the space under consideration.

\end{document}